\documentclass[a4paper,11pt]{article}

\usepackage[utf8]{inputenc} 
\usepackage{amsfonts}
\usepackage{amssymb}
\usepackage{amsthm}
\usepackage{amsmath}
\usepackage{a4wide}
\usepackage{mathrsfs}
\usepackage{epsfig}
\usepackage{comment}
\usepackage{palatino}
\usepackage{tikz}
\usepackage{fp,ifthen}
\usepackage{esint}
\usepackage{nicefrac}
\usetikzlibrary{decorations.pathreplacing}
\usepackage{float}
\usepackage{enumerate} 
\usepackage{enumitem} 

\usepackage[a4paper,twoside,top=2.5cm, bottom=2cm, left=2.3cm, right=2.3cm]{geometry} 

 \usepackage{lipsum}

\usepackage[colorinlistoftodos,textwidth=2.3cm]{todonotes} 

\newcommand{\pdfgraphics}{\ifpdf\DeclareGraphicsExtensions{.pdf,.jpg}\else\fi}
\usepackage{graphicx}

\usepackage{color}
\definecolor{hanblue}{rgb}{0.27, 0.42, 0.81}
\definecolor{red}{rgb}{1.0, 0.0, 0.0}
\usepackage[colorlinks, citecolor=blue,linkcolor=blue, urlcolor = blue]{hyperref}
\usepackage[english,capitalize]{cleveref}

\usepackage{mathtools}

\theoremstyle{plain}

\newtheorem{theorem}{Theorem}[section]
\newtheorem{lemma}[theorem]{Lemma}
\newtheorem{proposition}[theorem]{Proposition}

\theoremstyle{definition}
\newtheorem{definition}[theorem]{Definition}

\theoremstyle{remark}
\newtheorem{remark}[theorem]{Remark}

\numberwithin{equation}{section}

\newcommand{\de}{\ensuremath{\,\mathrm d}} 

\newcommand \eps{\ensuremath{\varepsilon}} 
\renewcommand{\epsilon}{\varepsilon}
\newcommand{\N}{\ensuremath{\mathbb N}}
\newcommand{\R}{\ensuremath{\mathbb R}}

\makeatletter
\renewcommand*\env@matrix[1][*\c@MaxMatrixCols c]{%
  \hskip -\arraycolsep
  \let\@ifnextchar\new@ifnextchar
  \array{#1}}
\makeatother

\begin{document}

\pdfgraphics 

\title{Sets of finite perimeter on Riemannian manifolds and stochastic completeness}

\author{Luca Gennaioli \footnote{\href{L.Gennaioli@warwick.ac.uk}{L.Gennaioli@warwick.ac.uk}}}

\date{\today}

\maketitle

\vspace{-0.5cm}
\begin{abstract}
We prove a heat semigroup characterization of the total variation for compactly supported ${\rm BV}$ on arbitrary smooth complete weighted Riemannian manifolds, extending the main result in \cite{GP15}. We then provide an example of a weighted manifold where such equivalence does not hold for a large class of sets of finite perimeter.
\end{abstract}

\tableofcontents

\section{Introduction}
\subsection{Overview on the topic}
The theory of sets of finite perimeter in $\R^n$ was formally introduced by De Giorgi in \cite{DG53}. His definition used the \emph{heat kernel}
\begin{equation*}
    p_t(x,y)=\frac{1}{(4\pi t)^\frac{n}{2}}e^{-\frac{|x-y|^2}{4t}}
\end{equation*}
to smooth out the characteristic function of a set $\Omega$, $\chi_\Omega$, and then define the perimeter as follows 
\begin{equation}
\label{def:per_heat}
    {\rm Per}(\Omega)=\lim_{t\to 0^+}\int_{\R^n}|\nabla p_t\ast\chi_\Omega|(x)\de x.
\end{equation}
In the later works \cite{DG54} and \cite{DG55} he then managed to reconcile the theory with the one pioneered by Caccioppoli and with the one of  Carathéodory, Hausdorff and Federer, based on measure theoretic concepts and on rectifiability. These works led to an equivalent definition of perimeter, that is 
\begin{equation}
    \label{def:per_div}
      {\rm Per}(\Omega)=\sup\bigg\{\int_\Omega{\rm div}X\de x:\,X\in C^1_c(\R^n;\R^n),\;|X|\leq 1\bigg\}.
\end{equation}
Replacing $\chi_\Omega$ by a general $L^1_{\rm loc}$ function $u$, De Giorgi gave also the definition of total variation of the gradient of a function, introducing the concept of function bounded variation (BV function) of several variables.
\newline
The theory of BV functions and of sets of finite perimeter has since then found countless applications and has developed in various research directions. One of the core questions revolving around the notion of total variation of the gradient of a BV function (and consequently of perimeter) was if can one define such quantity (and develop a similar theory) on more general ambient spaces. 
\newline
In the case of smooth structures such as Riemannian manifolds several contributions appeared over the years: the first one is probably \cite{MPPP07}, where the authors, using the heat kernel of the Riemannian manifold $(M,g)$, were able to prove the equivalence between \eqref{def:per_heat} and \eqref{def:per_div} assuming a lower bound on the Ricci curvature and a non-collapsing assumption on the volume of the balls. In \cite{CM07} the authors were then able to remove the non-collapsing assumption and gave a simpler proof, exploiting bounds on the heat flow of differential forms. Finally in \cite{GP15} the authors improved on all the previous approaches by removing a uniform lower Ricci bound and asking for a Kato-type condition on the Ricci tensor which, roughly speaking, asks for the negative part of the such tensor to be integrable. The final tasks would be to establish the validity or the non-validity of the equivalence between \eqref{def:per_heat} and \eqref{def:per_div} on general smooth Riemannian manifolds.
\newline
In this note we shall discuss both type of phenomena: on one hand we will show that on any weighted manifold the equivalence holds true for functions $u\in{\rm BV}(M)$ provided the support of $u$ is bounded (see Theorem \ref{thm:bounded_asympt}). The requirement of boundedness of the support is not merely technical: to obtain good "tail estimates" on the heat kernel it seems a necessary condition. It would be nice to understand if this approach can be extended to cover the case of general ${\rm BV}$ functions. On the other hand we will construct an example of a weighted manifold (see Theorem \ref{thm:equiv_failure} and also Remark \ref{rem:counter_manifold} for the same pathology on a conical manifold $(M,g)$) where such equivalence is no longer true if one considers sets of finite perimeter which are the complement of bounded sets. The key in obtaining such counterexample is to exploit stochastic incompleteness of the (weighted) manifold we build along with a stronger property of the heat kernel of this specific example (see Proposition \ref{prop:grad_heat_blowup}).

\subsection{Statement of the main results}
We are now going to state our main results, referring to Section \ref{sec:prelim} for the relevant definitions.

The first theorem we are going to prove establishes that, on general weighted manifolds $(M,g,e^{-V}{\rm vol})$ (we shall simply write $(M,g,\mu)$ for short), the equivalence between \eqref{def:per_heat} and \eqref{def:per_div} is still in place if the ${\rm BV}$ function we are dealing with is compactly supported.
\begin{theorem}
\label{thm:bounded_asympt}
    Let $(M,g,\mu)$ be a smooth, complete, $n$-dimensional weighted Riemannian manifold, with $V\in C^\infty(M)$. Then for any compactly supported $u:M\to\R$ such that $u\in L^1(M)$ we have
    \begin{equation*}
        \lim_{t\to 0^+}\int_M|\nabla {\rm h}_tu|\de\mu=|Du|(M).
    \end{equation*}
\end{theorem}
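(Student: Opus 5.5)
The lower bound $\liminf_{t\to0^+}\int_M|\nabla {\rm h}_tu|\de\mu\geq |Du|(M)$ is soft and I would settle it first. Since $u\in L^1(M)$, we have ${\rm h}_tu\to u$ in $L^1(M)$. The total variation is lower semicontinuous under $L^1_{\rm loc}$ convergence, and for smooth ${\rm h}_tu$ it equals $\int_M|\nabla {\rm h}_tu|\de\mu$. In particular, if $|Du|(M)=\infty$ the limit is $+\infty$ and there is nothing more to show. So I may assume $u\in{\rm BV}(M)$ with ${\rm supp}\,u\subset B_R(o)$, and the whole task is the upper bound $\limsup_{t\to0}\int_M|\nabla {\rm h}_tu|\de\mu\le|Du|(M)$.

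For the upper bound I split $M$ into a compact region $K=\overline{B_{R+2}(o)}$ and its complement.

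\emph{Inside $K$.} I would localise the commutation between gradient and heat flow. Since $K$ is compact, the weighted Bakry--Émery tensor ${\rm Ric}+\nabla^2V$ is bounded below by $-k$ on a neighbourhood of $K$. The most robust route is duality. For a vector field $X\in C^1_c(B_{R+2})$ with $|X|\le1$, write
\[
\int \langle\nabla {\rm h}_tu,X\rangle\de\mu=-\int u\,{\rm div}_\mu({\rm h}^{(1)}_tX)\de\mu ,
\]
where ${\rm h}^{(1)}_t$ is the weighted Hodge heat flow on $1$-forms, which commutes with $d$. I then need to control $|{\rm h}^{(1)}_tX|$ pointwise by roughly $e^{kt}$ times something at most $1$, plus an error that vanishes as $t\to0$, at least on a neighbourhood of ${\rm supp}\,u$. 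This is a Bochner/Kato domination $|{\rm h}^{(1)}_t\omega|\le e^{kt}{\rm h}_t|\omega|$, which holds when the curvature is bounded below globally. Since here it is bounded below only locally, I would instead compare with a modified manifold: change $g$ and $V$ outside $B_{R+3}$ to obtain a complete weighted manifold $\tilde M$ with bounded geometry and a global Bakry--Émery lower bound. On $\tilde M$ the result is known from \cite{CM07,GP15}. It then remains to show that the heat semigroups of $M$ and $\tilde M$, applied to $u$, have gradients that differ on $K$ by $o(1)$ in $L^1$ as $t\to0$. This follows from the principle of not feeling the boundary: both ${\rm h}_tu$ and $\tilde{\rm h}_tu$ are close to the Dirichlet heat flow on $B_{R+3}$, with an error bounded by $\sup_{s\le t}$ of the exit probability from $B_{R+3}$ starting from $B_{R+2}$. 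That probability is $O(e^{-c/t})$ by local Gaussian estimates. Interior parabolic (gradient) estimates then upgrade this smallness from the function to its gradient on $K$, for instance via Li--Yau-type or Schauder estimates on the slightly smaller ball.

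\emph{Outside $K$.} I need $\int_{M\setminus K}|\nabla {\rm h}_tu|\de\mu\to0$, which is where bounded support is used. I would write ${\rm h}_tu(x)=\int_{B_R}p_t(x,y)u(y)\de\mu(y)$ and bound the $L^1$ norm of $\nabla{\rm h}_tu$ on $M\setminus K$ by duality, using $\|\nabla {\rm h}_t f\|_{L^1}$ on an annulus. More concretely, I would use Davies--Gaffney/integrated maximum principle estimates: the function $v={\rm h}_tu$ restricted to $M\setminus B_{R+1}$ solves the heat equation with $L^1$ mass at most $e^{-c/t}\|u\|_{L^1}$ by the integrated Gaussian bound $\int_{M\setminus B_{R+1}}p_t(\cdot,y)\le e^{-1/(Ct)}$ for $y\in B_R$. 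A Caccioppoli/energy estimate in time, using cutoffs and $\int_0^t\int|\nabla{\rm h}_su|^2\le\|u\|_2^2$ together with the $L^1$-smallness at large distance, would then give $\int_{M\setminus K}|\nabla{\rm h}_tu|\to0$.

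I expect this tail estimate to be the main obstacle. Without any global curvature or volume control, $L^1$ bounds for gradients far away are delicate, and the $L^2$-to-$L^1$ passage on an unbounded region needs volume control. To handle this I would split $M\setminus K$ further into $B_{\rho}\setminus K$, handled via local estimates, and $M\setminus B_\rho$, handled via the weighted $L^1$ bound on $|\nabla {\rm h}_tu|$ coming from the semigroup property ${\rm h}_t={\rm h}_{t/2}{\rm h}_{t/2}$ and $L^1$ gradient estimates for ${\rm h}_{t/2}$ on compact sets.
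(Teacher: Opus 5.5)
The step that fails is the one you flag yourself: $\int_{M\setminus K}|\nabla{\rm h}_tu|\de\mu\to0$. None of your routes delivers it. The integrated maximum principle makes ${\rm h}_tu$ small in $L^1(M\setminus B_{R+1})$, but not its gradient. The Caccioppoli/energy passage from $L^2$ to $L^1$ on an unbounded region needs volume control you do not have. The semigroup trick does not localize: in $\nabla{\rm h}_tu=\nabla{\rm h}_{t/2}({\rm h}_{t/2}u)$ the function ${\rm h}_{t/2}u$ is not compactly supported, so gradient estimates on compact sets say nothing about the far region. Sub-Markovianity only controls $\int_M|\nabla_yp_t(x,y)|\de\mu(x)$, with the gradient in the source variable, which is not the quantity you need.

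The paper settles this step in one line by quoting Lemma~\ref{lem:small_time_heat} in the form $\int_{B_R^c(p)}|\nabla_xp_t(x,y)|\de\mu(x)\le Ce^{-c/t}$ for $y\in B_r(p)$. Your treatment of $K$, by contrast, is a sound alternative to the paper's localized Bochner--Gronwall computation on $B_R(p)$, where boundary terms are killed by Hessian decay on $\partial B_R(p)$. You glue to a complete metric and weight with a global Bakry--\'Emery lower bound, compare with the Dirichlet flow on $B_{R+3}$, and upgrade to gradients by interior parabolic estimates. Bound the comparison error by $\sup_{s\le t}\sup_{\partial B_{R+3}}{\rm h}_s|u|=O(e^{-c/t})\|u\|_{L^1}$ rather than by an exit probability, since $u$ is only integrable. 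With that fix, your argument does prove the localized statement $\lim_{t\to0^+}\int_K|\nabla{\rm h}_tu|\de\mu=|Du|(M)$.

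The tail, however, cannot be controlled at this level of generality. Let $u\ge0$, $u\not\equiv0$, be compactly supported, and let $M$ be stochastically incomplete, e.g.\ the manifold of Proposition~\ref{prop:grad_heat_blowup}. With $\phi_k=\min\{1,\max\{0,2-d(o,\cdot)/k\}\}$, testing the heat equation against $\phi_k$ gives
\[
\Big|\int_M{\rm h}_{t_2}u\,\phi_k\de\mu-\int_M{\rm h}_{t_1}u\,\phi_k\de\mu\Big|=\Big|\int_{t_1}^{t_2}\!\!\int_M g(\nabla{\rm h}_tu,\nabla\phi_k)\de\mu\de t\Big|\le\frac1k\int_{t_1}^{t_2}\|\nabla{\rm h}_tu\|_{L^1(M)}\de t .
\]
If $\int_{t_1}^{t_2}\|\nabla{\rm h}_tu\|_{L^1(M)}\de t<\infty$, letting $k\to\infty$ gives $\int_M{\rm h}_{t_2}u\de\mu=\int_M{\rm h}_{t_1}u\de\mu$. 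But $\int_M{\rm h}_tu\de\mu=\int_Mu\,m(t,\cdot)\de\mu$ with $m(t,\cdot)={\rm h}_t1$. For $t_1<t_2$, stochastic incompleteness gives $m(t_1,\cdot)-m(t_2,\cdot)={\rm h}_{t_1}\big(1-m(t_2-t_1,\cdot)\big)>0$ everywhere, so the mass strictly decreases.

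Hence $t\mapsto\|\nabla{\rm h}_tu\|_{L^1(M)}$ is non-integrable on every $[t_1,t_2]\subset(0,\infty)$. A finite limit at $0^+$ would make it bounded, hence integrable, on some such interval, so no finite limit exists. For $u=\chi_{B_\rho(p)}$ with ${\rm Per}(B_\rho(p))<\infty$ the statement therefore fails on such $M$. By the same argument, the bound from Lemma~\ref{lem:small_time_heat}, as used in the paper's tail estimate, cannot hold there either. Mass escaping to infinity forces a non-integrable gradient tail. What is missing is a global hypothesis, and stochastic completeness is necessary; with purely local tools, yours or the paper's, only the localized statement on $K$ is within reach.
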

\begin{remark}
    We would like to point out the assumption on the compactness of the support of $u$. This assumption was not needed in \cite{MPPP07}, \cite{CM07} and \cite{GP15} due to the uniformity on the estimates for the heat kernel. However in our case this turns out to be crucial (in general) to exploit estimates for  the heat kernel.
\end{remark}
\begin{remark}
    If the manifold $(M,g)$ is stochastically complete, then we have ${\rm h}_tu=1-{\rm h}_t(1-u)$. In the case of sets this means that $\nabla{\rm h}_t\chi_E=-\nabla{\rm h}_t\chi_{E^c}$ and if $E\subset M$ is bounded, then 
    \begin{equation*}
        \lim_{t\to 0^+}\int_M|\nabla {\rm h}_t\chi_{E^c}|\de\mu={\rm Per}(E).
    \end{equation*}
    Therefore the sought characterization extends to a class of non-compact sets with infinite measure. An interesting topic to investigate would be if it is possible to remove the assumption of compactness of the supports and ask just for integrability (as the proofs in \cite{MPPP07}, \cite{CM07} and \cite{GP15} do). Furthermore, we highlight that all the assumptions in \cite{MPPP07}, \cite{CM07} and \cite{GP15} entail that $(M,g)$ is stochastically complete.
\end{remark}
The second crucial and perhaps surprising result is that there exists a weighted manifold for which the mentioned equivalence does not hold for \emph{any} characteristic function of the complement of a bounded set of finite perimeter. More precisely, we have the following:
\begin{theorem}
\label{thm:equiv_failure}
    There exists a smooth, complete, weighted manifold $(M,g,\mu)$ which is stochastically incomplete, such that for all bounded sets $E\subset M$ of finite perimeter one has 
    \begin{equation}
    \label{eq:blowup_set}
        \int_M|\nabla {\rm h}_t\chi_{E^c}|\de\mu=\infty\quad\forall t>0.
    \end{equation}
\end{theorem}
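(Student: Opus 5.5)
The plan is to build a rotationally symmetric weighted model manifold $M=\R^n$ (or $\R$ with a weight) with metric $g=dr^2+\sigma(r)^2 g_{S^{n-1}}$ and weight $e^{-V(r)}$, chosen so that the weighted volume grows very fast and the drift is strong enough to make Brownian motion explode in finite time. Stochastic incompleteness can be checked with Grigor'yan's radial criterion: $\int^\infty \frac{\mu(B_r)}{\mu(\partial B_r)}\,dr<\infty$. Then $u_t:={\rm h}_t 1<1$ everywhere, and $u_t$ is radial.

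The starting identity uses linearity and finiteness of $\chi_E$ in $L^1$:
\[
{\rm h}_t\chi_{E^c}={\rm h}_t 1-{\rm h}_t\chi_E .
\]
Since $E$ is bounded, $\chi_E\in L^1\cap L^\infty$ and $|D\chi_E|(M)<\infty$. I will show that $\int_M|\nabla{\rm h}_t\chi_E|\de\mu<\infty$ for every $t>0$. Semigroup contraction in $L^1$ does not give this directly. Instead I will use ${\rm h}_t\chi_E\le {\rm h}_t\chi_{B_R}$ and the explicit radial structure, or else the fact that, for $t>0$, the bound $\|\nabla {\rm h}_t f\|_{L^1}\le C(t)\|f\|_{L^1}$ holds on the model thanks to the Bakry--Émery type lower bound that I will arrange on $\mathrm{Ric}_V$. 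If the curvature cannot be kept bounded below, I will instead compare $\nabla {\rm h}_t\chi_E$ with the gradient of the heat kernel $p_t(\cdot,o)$ via a Harnack/Gaussian estimate at infinity. This is the weaker side of the argument.

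The key step, which I expect to be Proposition~\ref{prop:grad_heat_blowup}, is to show that $\int_M|\nabla {\rm h}_t1|\de\mu=\infty$ for every $t>0$. Given this, the triangle inequality yields \eqref{eq:blowup_set}. Because ${\rm h}_t1$ is radial, $|\nabla{\rm h}_t 1|=|\partial_r u_t(r)|$, and the integral equals $\int_0^\infty|\partial_r u_t|\,S(r)\,dr$, where $S(r)=\mu(\partial B_r)$. Integrating the heat equation $\partial_t u=S^{-1}(S u_r)_r$ over $B_r$ gives
\[
S(r)\,\partial_r u_t(r)=\int_{B_r}\partial_t u_t\de\mu=\frac{d}{dt}\int_{B_r}u_t\de\mu .
\]
Since $u_t$ is decreasing in $t$, with $\partial_t u_t<0$ and $\partial_t u_t=\Delta u_t$, the integrand is $|\int_{B_r}\partial_t u_t\de\mu|$, which is nondecreasing in $r$. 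The integral therefore diverges as soon as $\int_{B_{r_0}}\partial_t u_t\de\mu\neq 0$, because $\int_{r_0}^\infty dr=\infty$. By strict incompleteness ($u_t<1$ with $u_0=1$), $\partial_t u_t\not\equiv0$ near any point, so this nonvanishing holds.

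The main obstacle is making this rigorous: the radial integration must be justified for the minimal heat semigroup (via exhaustion by Dirichlet problems on $B_R$), the monotonicity of $u_t$ in $t$ and the sign of $\partial_t u_t$ must be established, and the finiteness of $\int|\nabla {\rm h}_t\chi_E|\de\mu$ must be proven on this specific model. For the latter I would try a maximum principle comparison, $|\nabla {\rm h}_t\chi_E|\lesssim|\nabla{\rm h}_t\chi_{B_R}|$ outside $B_{2R}$. Combined with the same radial identity applied to ${\rm h}_t\chi_{B_R}$, this gives $S|\partial_r{\rm h}_t\chi_{B_R}|=|\frac d{dt}\int_{B_r}{\rm h}_t\chi_{B_R}|$. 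I would then show that this quantity decays at infinity, since mass escapes to infinity only at a rate controlled by $\mu(B_R)$; this step needs care.
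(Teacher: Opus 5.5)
There is a genuine gap, and it is the step you already call weak: the finiteness of $\int_M|\nabla{\rm h}_t\chi_E|\de\mu$. This is not merely hard to prove. It is false on every complete stochastically incomplete weighted manifold whenever $\mu(E)>0$, so no choice of model and no comparison argument can supply it. (A global bound ${\rm Ric}\geq -K$ is ruled out anyway, since a Bakry--\'Emery lower bound forces stochastic completeness.)

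Your own flux identity shows why. Take Lipschitz cutoffs $\eta_r$ with $\eta_r=1$ on $B_r$, $\eta_r=0$ off $B_{r+1}$, and $|\nabla\eta_r|\leq 1$. Green's formula and the symmetry of $p_t$ give
\begin{equation*}
\int_{B_{r+1}\setminus B_r}|\nabla{\rm h}_t\chi_E|\de\mu\;\geq\;\Big|\int_M\eta_r\,\partial_t{\rm h}_t\chi_E\de\mu\Big|\;=\;\Big|\int_E\partial_t{\rm h}_t\eta_r\de\mu\Big|\;\longrightarrow\;\Big|\int_E\partial_t{\rm h}_t1\de\mu\Big|>0 .
\end{equation*}
The limit holds because ${\rm h}_t\eta_r\uparrow{\rm h}_t1$ locally uniformly together with time derivatives (Dini plus interior parabolic estimates). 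The strict inequality holds because $\partial_t{\rm h}_t1<0$ everywhere: apply the strong maximum principle to $-\partial_t{\rm h}_t1\geq0$ and use ${\rm h}_t1\to1$ as $t\to0$.

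So the flux through large spheres does not decay in $r$. It tends to the rate of mass loss at infinity, which is exactly what stochastic incompleteness means; your intuition that the escape rate decays is the wrong way round. Summing over $r\in\N$ gives $\int_M|\nabla{\rm h}_t\chi_E|\de\mu=\infty$ for every $t>0$, so the triangle inequality yields only $\infty\leq\infty$.

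The paper's proof takes the same route and asserts this finiteness without justification, so it has the same defect. The computation above also shows that the conclusion of Theorem~\ref{thm:bounded_asympt} fails on the manifold of Proposition~\ref{prop:grad_heat_blowup} for $u=\chi_{B_1}$.

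The repair is to skip the splitting and run your flux argument directly on ${\rm h}_t\chi_{E^c}={\rm h}_t1-{\rm h}_t\chi_E$. Use the same cutoffs, together with $\partial_t{\rm h}_t1\leq0$, $\eta_r\geq\chi_{B_r}$ and $E\subset B_r$:
\begin{equation*}
\int_{B_{r+1}\setminus B_r}|\nabla{\rm h}_t\chi_{E^c}|\de\mu\;\geq\;-\int_M\eta_r\,\partial_t{\rm h}_t1\de\mu+\int_E\partial_t{\rm h}_t\eta_r\de\mu\;\geq\;-\int_{B_r\setminus E}\partial_t{\rm h}_t1\de\mu+o(1)\qquad(r\to\infty).
\end{equation*}
Choose $r_0$ with $\mu(B_{r_0}\setminus E)>0$ and set $\delta:=-\int_{B_{r_0}\setminus E}\partial_t{\rm h}_t1\de\mu>0$. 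Then the right-hand side is at least $\delta/2$ for all large $r$, and summing over $r\in\N$ gives \eqref{eq:blowup_set}.

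This proves the statement on any complete stochastically incomplete weighted manifold, for example your model or $\R^3$ with $e^{|x|^4}\mathscr{L}^3$. It holds for every bounded $E$, and finite perimeter plays no role. Your blow-up of $\int_M|\nabla{\rm h}_t1|\de\mu$ is the special case $E=\emptyset$.
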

\section{Preliminaries}
\label{sec:prelim}
For a weighted manifold $(M,g,\mu)$  
the weighted divergence ${\rm div}$ is defined as the operator acting on smooth vector fields $X$ as 
\begin{equation*}
    {\rm div}X={\rm div}_gX-g(\nabla V, X),
\end{equation*}
where ${\rm div}_g$ is the classical divergence and $\nabla$ is the classical gradient.
Consequently, the weighted Laplacian $\Delta$ is defined on smooth functions $u:M\to\R$ as $\Delta u = {\rm div}\nabla u$, and hence we have
\begin{equation*}
    \Delta u = \Delta_g u-g(\nabla V,\nabla u),
\end{equation*}
where $\Delta_g={\rm div}_g\nabla$ is the classical Laplace-Beltrami operator of the manifold. One can also define a weighted Ricci tensor, given by 
\begin{equation*}
    {\rm Ric}={\rm Ric}_g+{\rm Hess}V,
\end{equation*}
where ${\rm Ric}_g$ is the Ricci tensor of $(M,g)$.
With the latter notions one can write the Bochner formula 
\begin{equation}
\label{eq:weighted_Bochner}
    \Delta\bigg(\frac{|\nabla u|^2}{2}\bigg)=|{\rm Hess}(u)|^2_{\rm HS}+{\rm Ric}(\nabla u,\nabla u)+g(\nabla u,\nabla\Delta u),
\end{equation}
where $|{\rm Hess}(u)|^2_{\rm HS}$ is the squared Hilbert-Schmidt norm of the Hessian of $u$.
We did not highlight the dependence on the weight of the operators to lighten a bit the notation, since such symbols will appear frequently.

The weighted Laplacian $\Delta$ induces the notion of \emph{heat kernel}. For all $t>0$ we shall denote by $p_t:M\times M\to(0,\infty)$ the minimal non-negative fundamental solution of the heat equation (which we call heat kernel). Given a function $u$ which is either in $L^1(M)$ or in $L^\infty(M)$ we define the function ${\rm h}_tu:M\to\R$ as
\begin{equation*}
    {\rm h}_tu(x)=\int_M p_t(x,y)u(y)\de\mu(y).
\end{equation*}
Such function is clearly smooth on $(0,\infty)\times M$ if $u\in L^1(M)$ and satisfies the heat equation pointwise. These properties hold even if $u\in L^\infty(M)$, though since we could not find a reference in the literature we shall briefly discuss this matter: first without loss of generality we shall assume that $u\geq 0$ (otherwise we would just have to argue separately for the positive and the negative part). Then by H\"older inequality we have ${\rm h}_tu\leq\|u\|_\infty$ for all $t>0$, thus proving that ${\rm h}_{\cdot} u\in L^1_{\rm loc}((0,\infty)\times M)$. We can finally infer the claim after an application of \cite[Theorem 7.15]{G09}.

We stress that for a bounded (even continuous) initial datum $u$ without additional integrability solutions to the heat equation are not unique in general. For that to be the case we have to ask for the following property to hold.
\begin{definition}
\label{def:stoc_compl}
    We say that a complete weighted $n$-dimensional manifold $(M,g,\mu)$ is stochastically complete if 
    \begin{equation*}
        \int_Mp_t(x,y)\de\mu(y)=1\quad\forall t>0,\,x\in M.
    \end{equation*}
    Otherwise we call $(M,g,\mu)$ stochastically incomplete.
\end{definition}

Regarding non-uniqueness we refer the reader to \cite[Theorem 8.18]{G09}.
We mention that for ${\rm h}_t u$ the semigroup property holds, that is for all $t,s>0$ one has
\begin{equation*}
    {\rm h}_{t+s}u={\rm h}_t({\rm h}_s u).
\end{equation*}
We finally record the following Lemma which is a combination of \cite[Lemma 2.16]{CFSS24} and \cite[Lemma 2.17]{CFSS24} and provides key estimates for the heat kernel.
\begin{lemma}
\label{lem:small_time_heat}
    Let $(M,g,\mu)$ be a smooth, complete $n$-dimensional weighted Riemannian manifold. Then, for all $p\in M$ and $r>0$ there exists $R>r>0$ and constant $C,c>0$ depending only on $R,r$ and $p$ such that
    \begin{equation*}
        \int_{M\setminus B_R(p)}|\nabla p_t|(x,y)\de\mu(y)\leq Ce^{-\frac{c}{t}},\; |{\rm Hess}p_t|_{\rm HS}(x,z)\leq Ce^{-\frac{c}{t}}
    \end{equation*}
    for all $x\in B_{r}(p)$, $z\in B_R^c(p)$ and $t>0$.
\end{lemma}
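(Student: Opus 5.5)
The plan is to localize everything to a fixed compact neighbourhood of $p$ and combine two ingredients. The first is an integrated off-diagonal (exit-time) bound for $p_t$. The second is interior parabolic regularity applied to $(s,x)\mapsto p_s(x,y)$ on a compact cylinder, where $V$ and $g$ are smooth with bounded geometry. Fix $p$ and $r$, choose intermediate radii $r<r_1<r_2<R$ (for instance $r_1=r+1$, $r_2=r+2$, $R=r+3$), and set $K=\overline{B_{r_2}(p)}$, which is compact by completeness. All constants below depend only on the geometry of $g$ and $V$ on a neighbourhood of $K$, and hence only on $p,r,R$.

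\emph{Step 1 (integrated off-diagonal bound).} I would first show
\begin{equation*}
\sup_{x\in B_{r_1}(p)}\int_{M\setminus B_{R'}(p)}p_s(x,y)\de\mu(y)\leq C e^{-\frac{c}{s}},\qquad R'\in\{r_2,R\},\ s\in(0,1].
\end{equation*}
This is the probability that the diffusion started in $B_{r_1}$ leaves $B_{R'}$ before time $s$. It is controlled by a barrier argument on the annulus $B_{R'}\setminus B_{r_1}$: compare $p$ with the minimal Dirichlet heat kernel on $B_{R'}$ via Grigor'yan's exit-time estimate, or equivalently use a Davies--Gaffney type integrated maximum principle with weight $e^{\xi}$, $\xi(y)=d(y,B_{r_1})^2/(Cs)$. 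Both arguments use only the local geometry, so no global curvature assumption is needed. By symmetry of $p_s$ the same bound holds with the roles of $x$ and $y$ swapped.

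\emph{Step 2 (from $p$ to $\nabla p$ and ${\rm Hess}\,p$).} Fix $y\in M\setminus B_R(p)$ and set $u(s,x)=p_s(x,y)$. This function solves $\partial_s u=\Delta u$ on $(0,\infty)\times B_{r_2}(p)$ and extends smoothly by $0$ to $s\le 0$ there, since $y$ lies away from $B_{r_2}$. Local parabolic Schauder estimates together with the parabolic mean value inequality (Moser) on cylinders of fixed size inside $(-1,\infty)\times B_{r_2}$ give, for $x\in B_r$ and $t\le1$,
\begin{equation*}
|\nabla_x p_t(x,y)|+|{\rm Hess}_x p_t(x,y)|_{\rm HS}\leq C\int_{t-1}^{t}\!\!\int_{B_{r_1}(p)} p_s(x',y)\de\mu(x')\de s .
\end{equation*}
Here $p_s\equiv0$ for $s\le0$. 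Integrating in $y$ over $M\setminus B_R$, applying Fubini and then Step 1 with $x'$ and $y$ swapped yields the gradient bound $\int_{B_R^c}|\nabla p_t|(x,y)\de\mu(y)\le C\int_0^t e^{-c/s}\de s\le Ce^{-c/t}$. For the pointwise Hessian bound I need $\sup_{x'\in B_{r_1},\,z\in B_R^c}p_s(x',z)\le Ce^{-c/s}$. To get it, I would apply the mean value inequality also in the second variable around $z$ (on a unit ball, using the semigroup identity $p_s(x',z)=\int p_{s/2}(x',w)p_{s/2}(w,z)\de\mu(w)$). The factor $p_{s/2}(x',w)$ is small by Step 1 once $w$ is far from $B_{r_1}$, while $p_{s/2}(w,z)\le p_{1}(w,w)^{1/2}p_1(z,z)^{1/2}$-type on-diagonal bounds keep the other factor controlled.

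\emph{Step 3 (large times) and the main obstacle.} For $t\ge1$, $e^{-c/t}\ge e^{-c}$, so it suffices to bound the left-hand sides by a constant. I would use the same interior estimates on the cylinder $[t-1,t]\times B_{r_1}$, together with $\int_M p_s(x',y)\de\mu(y)\le1$ and the monotonicity of $s\mapsto p_s(x,x)$, to get uniform local $L^\infty$ bounds. I expect the main obstacle to be the Hessian estimate at points $z$ far away. Uniformity in $z\in B_R^c(p)$ cannot come from local geometry near $z$, since the geometry there is uncontrolled. This is why the argument must route all smallness through the integrated bound of Step 1 and the symmetry of $p$, and keep the regularity estimates in the $x$-variable near $p$. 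If the pointwise bound in $z$ cannot be made uniform this way, the fallback is to prove it with $z$ ranging over compact subsets of $B_R^c$. That is what is used when it is later integrated against the compactly supported $u$ of Theorem \ref{thm:bounded_asympt}.
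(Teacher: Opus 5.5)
Your route differs from the paper's, which does nothing by hand: it quotes the local estimates of \cite{CFSS24} (Lemmas 2.16--2.17) on finitely many charts covering $B_r(p)$. Your localization is sound for the gradient bound read as $\int_{M\setminus B_R(p)}|\nabla_x p_t(x,y)|\de\mu(y)$, with the derivative in the variable near $p$. It consists of Step 1, interior parabolic estimates near $p$, Fubini, and $\int_M p_s(x',y)\de\mu(y)\le 1$ for $t\ge1$.

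\textbf{The genuine gap is the Hessian bound uniformly in $z\in B_R^c(p)$.} After your interior estimate, what you need is $\sup_{z\in B_R^c(p)}\int_{B_{r_1}(p)}p_s(x',z)\de\mu(x')=\sup_{z\in B_R^c(p)}{\rm h}_s\chi_{B_{r_1}(p)}(z)\le Ce^{-c/s}$. The route you sketch cannot give this, for three reasons:
\begin{itemize}
\item A mean value inequality on unit balls around $z$ has constants that depend on the uncontrolled geometry near $z$.
\item $p_1(z,z)$ need not be bounded on $B_R^c(p)$.
\item For small $s$, $p_{s/2}(w,z)$ is not bounded by $p_1(w,w)^{1/2}p_1(z,z)^{1/2}$: near the diagonal it is of order $s^{-n/2}$, because $s\mapsto p_s(w,w)$ is non-increasing.
\end{itemize}
Your fallback to compact sets of $z$ proves a weaker statement.

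The missing idea is a maximum principle in the exterior region, which uses no geometry near $z$:
\begin{itemize}
\item Your own local argument plus Step 1 (with any $R'>r_1$) gives ${\rm h}_s\chi_{B_{r_1}(p)}\le Ce^{-c/s}$ for $s\le1$ on the compact set $\partial B_{r_2}(p)$.
\item Take a precompact exhaustion $(\Omega_k)$. The Dirichlet solutions ${\rm h}^{\Omega_k}_s\chi_{B_{r_1}(p)}$ vanish initially on $\Omega_k\setminus\overline{B_{r_2}(p)}$ and on $\partial\Omega_k$, and are $\le{\rm h}_s\chi_{B_{r_1}(p)}$ on $\partial B_{r_2}(p)$.
\item Hence they are $\le Ce^{-c/s}$ on $\Omega_k\setminus\overline{B_{r_2}(p)}$, since the barrier is non-decreasing in $s$. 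Now let $k\to\infty$.
\end{itemize}

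\textbf{Which variable carries the derivative.} Differentiating in the variable near $p$ is the natural reading of the statement. However, your closing remark about Theorem \ref{thm:bounded_asympt} is inaccurate. There the lemma is invoked as $\int_{B_R^c(p)}|\nabla_x p_t|(x,y)\de\mu(x)$ with $y\in{\rm supp}\,u\subset B_r(p)$, and ${\rm Hess}({\rm h}_tu)$ is evaluated on $\partial B_R(p)$. In both cases the derivative falls on the far variable.

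The Hessian on the compact set $\partial B_R(p)$ is harmless: run your interior estimates around $z$ instead of $x$ and feed in Step 1. The integrated far-variable gradient bound, however, cannot come from any refinement of your argument. It fails on every stochastically incomplete manifold, for instance the one of Proposition \ref{prop:grad_heat_blowup}:
\begin{itemize}
\item Suppose $\int_{B_R^c(p)}|\nabla_y p_t(p,y)|\de\mu(y)\le C$ for $t\in[t_1,t_2]$.
\item Test $\partial_t p_t(p,\cdot)=\Delta p_t(p,\cdot)$ against Lipschitz cut-offs $\eta_\rho$, $\rho\ge R$, equal to $1$ on $B_\rho(p)$, supported in $B_{2\rho}(p)$, with $|\nabla\eta_\rho|\le1/\rho$.
\item This gives $|\int_M\eta_\rho\,(p_{t_2}-p_{t_1})(p,\cdot)\de\mu|\le C(t_2-t_1)/\rho$. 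Letting $\rho\to\infty$ yields ${\rm h}_{t_1}1(p)={\rm h}_{t_2}1(p)$.
\item Since ${\rm h}_t1(p)\to1$ as $t\to0^+$, this forces stochastic completeness.
\end{itemize}
The same computation with ${\rm h}_tu$ in place of $p_t(p,\cdot)$, for $0\le u\not\equiv0$, shows that $\int_M|\nabla{\rm h}_tu|\de\mu$ is unbounded as $t\to0^+$ on such manifolds.
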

\begin{proof}
    The proof follows from \cite[Lemma 2.16]{CFSS24}  and \cite[Lemma 2.17]{CFSS24} after covering $B_r(p)$ with a, crucially finite, number of charts. Their proof is for smooth Riemannian manifolds but the modifications for the weighted case are minimal, we hence skip the details.
\end{proof}
We can now define functions of bounded variation as follows.
\begin{definition}
\label{def:BV}
    We say that a measurable function $u:M\to\R$ belongs to ${\rm BV}(M)$ if $u\in L^1(M)$ and 
    \begin{equation}
    \label{def:tot_var}
        |Du|(M):=\sup\bigg\{\int_Mu\,{\rm div}X\de\mu:\;X\in C^1_c(M;TM),\,|X|\leq 1\bigg\}<+\infty,
    \end{equation}
    where $C_c^1(M;TM)$ denotes denotes the class of compactly supported vector fields of class $C^1$ and $|X|=g(X,X)^\frac{1}{2}$. Moreover we say that a set $\Omega\subseteq M$ has finite perimeter if $\chi_\Omega$ satisfies \eqref{def:tot_var}, without requiring $\Omega$ to be of finite measure.
\end{definition}
\section{Proofs of the main results}

\begin{proof}[Proof of Theorem \ref{thm:bounded_asympt}]
First of all the following inequality is true for any function $u\in L^1(M)$
\begin{equation*}
    |Du|(M)\leq\liminf_{t\to 0^+}\int_M|\nabla{\rm h}_tu|\de\mu,
\end{equation*}
following by lower semicontinuity of the total variation under $L^1(M)$ convergence.
    To prove the $\limsup$ inequality we can of course assume $u\in {\rm BV}(M)$. Let $\varepsilon>0$ be fixed and choose $R\geq r>0$ and $p\in M$ such that the following are satisfied: for such values Lemma \ref{lem:small_time_heat} holds, ${\rm Per}(B_{R}(p))<+\infty$ (this is possible thanks to Coarea formula) and ${\rm supp}u\subset B_r(p)$. Then we claim
    \begin{equation}
    \label{eq:ball_reduction}
        \limsup_{t\to 0^+}\int_{M}|\nabla {\rm h}_tu|\de\mu= \limsup_{t\to 0^+}\int_{B_R(p)}|\nabla {\rm h}_tu|\de\mu.
    \end{equation}
   Indeed there holds
  \begin{equation*}
\int_{B_R^c(p)}|\nabla {\rm h}_tu|\de\mu\leq\int_{B_r(y)}|u|(y)\int_{B_R^c(p)}|\nabla_xp_t|(x,y)\de\mu(x)\de\mu(y)\leq C e^{-\frac{c}{t}}\|u\|_{L^1(M)}, 
  \end{equation*} 
  and the last term goes to zero as $t\to 0^+$ (note that $C$ depends on $R$).
  The proof now follows a similar argument of the one contained in \cite{MPPP07} but with several crucial differences: we are going to differentiate the function 
  \begin{equation*}
      F_\eps(t):=\int_{B_{R}(p)}\sqrt{|\nabla {\rm h}_tu|^2+\eps}\de\mu=\int_{B_R(p)}W_\eps\de\mu.
  \end{equation*}
Observe that 
\begin{equation*}
    \partial_tW_\eps=\frac{g(\partial_t\nabla {\rm h}_tu,\nabla {\rm h}_tu)}{W_\eps}=\frac{g(\nabla\Delta {\rm h}_tu,\nabla {\rm h}_tu)}{W_\eps}.
\end{equation*}
Bochner identity \eqref{eq:weighted_Bochner} then gives 
\begin{equation*}
    \frac{g(\nabla\Delta {\rm h}_tu,\nabla {\rm h}_tu)}{W_\eps}=\frac{1}{2}\frac{\Delta(|\nabla {\rm h}_t u|^2)}{W_\eps}-\frac{{\rm Ric}(\nabla {\rm h}_tu,\nabla {\rm h}_tu)}{W_\eps}-\frac{|{\rm Hess}({\rm h}_tu)|^2_{\rm HS}}{W_\eps}.
\end{equation*}
All in all we get 
\begin{equation*}
    F^\prime_\eps(t)=\frac{1}{2}\int_{B_{R}(p)}\frac{\Delta(|\nabla {\rm h}_t u|^2)}{W_\eps}\de\mu-\int_{B_{R}(p)}\frac{{\rm Ric}(\nabla {\rm h}_tu,\nabla {\rm h}_tu)}{W_\eps}\de\mu-\int_{B_{R}(p)}\frac{|{\rm Hess}({\rm h}_tu)|^2_{\rm HS}}{W_\eps}\de\mu.
\end{equation*}
For what concerns the first term on the r.h.s. we have
\begin{align*}
    \int_{B_{R}(p)}\frac{\Delta(|\nabla {\rm h}_t u|^2)}{W_\eps}\de\mu&=2\int_{\partial B_{R}(p)}{\rm Hess}({\rm h}_tu)\bigg(\frac{\nabla {\rm h}_t u}{|\nabla {\rm h}_t u|},\nu_{\partial B_{R}(p)}\bigg)\de\sigma -\int_{B_{R}(p)}g(\nabla (W_\eps^{-1}),\nabla|\nabla {\rm h}_t u|^2)\de\mu \\
    &=2\int_{\partial B_{R}(p)}{\rm Hess}({\rm h}_tu)\bigg(\frac{\nabla {\rm h}_t u}{W_\eps},\nu_{\partial B_{R}(p)}\bigg)\de\sigma +\frac{1}{2}\int_{B_{R}(p)}\frac{|\nabla|\nabla {\rm h}_t u|^2|^2}{W_\eps^3}\de\mu \\
    &\leq C\|u\|_{L^1(M)} e^{-\frac{c}{t}}{\rm Per}(B_{R}(p))+2\int_{B_{R}(p)}\frac{|{\rm Hess} ({\rm h}_tu)|_{\rm HS}^2}{W_\eps}\de\mu,
\end{align*}
where we used Gauss-Green theorem, Lemma \ref{lem:small_time_heat} and standard inequalities. Finally we can use the fact that there exists $K\in\R$ such that ${\rm Ric}\geq-K$ on $B_R(p)$. This fact, together with the previous inequalities, yields

\begin{equation*}
    F_\eps^\prime(t)\leq K\int_{B_R(p)}\frac{|\nabla {\rm h}_tu|^2}{W_\eps}\de\mu + Ce^{-\frac{c}{t}}\leq  K F_\eps(t)+Ce^{-\frac{c}{t}},
\end{equation*}
for all $t\in(0,1)$. Integrating the latter between $0<s<t<1$ then gives
\begin{equation*}
    F_\eps(t)\leq e^{K(t-s)}F_\eps(s)+Ce^{Kt}\int_s^te^{-Kr-\frac{c}{r}}\de r.
\end{equation*}
Finally, we can pass to the limit as $\eps\to 0^+$ and thanks to dominated convergence we get 
\begin{equation}
\label{eq:Gronwall}
     F(t)\leq e^{K(t-s)}F(s)+Ce^{Kt}\int_s^te^{-Kr-\frac{c}{r}}\de r,
\end{equation}
Where $F(t)=F_0(t)$.
Now we choose a sequence $(u_j)_{j\in\N}\subset C^\infty_c(M)$ such that $u_j\to u$ strictly in ${\rm BV}(M)$. By classical semicontinuity we have 
\begin{equation*}
    \int_{B_R(p)}|\nabla {\rm h_{t}}u|\de\mu\leq\liminf_{j\to\infty}\int_{B_R(p)}|\nabla {\rm h_{t}}u_j|\de\mu,
\end{equation*}
so that we can combine \eqref{eq:ball_reduction} and \eqref{eq:Gronwall} to get
\begin{align*}
    \limsup_{t\to 0^+}\int_{B_R(p)}|\nabla {\rm h_{t}}u|\de\mu&\leq  \limsup_{t\to 0^+}\liminf_{j\to\infty}\int_{B_R(p)}|\nabla {\rm h_{t}}u_j|\de\mu \\
    &\leq\limsup_{t\to 0^+}\liminf_{j\to\infty}\bigg[e^{Kt}|Du_j|(B_R(p))+Ce^{Kt}\int_0^te^{-Kr-\frac{c}{r}}\de r\bigg] \\
    &=\limsup_{t\to 0^+}\bigg[e^{Kt}|Du|(M)+Ce^{Kt}\int_0^te^{-Kr-\frac{c}{r}}\de r\bigg] \\
    &=|Du|(M).
\end{align*}
The previous inequality concludes the proof.
\end{proof}

The next Proposition, building a pathological manifold, is the key for the proof of Theorem \ref{thm:equiv_failure} and we believe it is of independent interest.

\begin{proposition}
\label{prop:grad_heat_blowup}
    There exists a smooth weighted manifold $(M,g,\mu)$ which is complete and such that
    \begin{equation}
        \label{eq:grad_heat_blowup}\int_M\bigg|\nabla_x\int_Mp_t(x,y)\de\mu(y)\bigg|\de\mu(x)=+\infty\quad\forall t>0.
    \end{equation}
\end{proposition}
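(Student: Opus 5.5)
I would build $M$ as a rotationally symmetric weighted model, $M=\R^n$ (or $\R$, or a surface of revolution), with metric $dr^2+\sigma(r)^2d\theta^2$ and weight $e^{-V(r)}$, chosen so that the weighted volume function $m(r)=\mu(B_r(o))$ grows fast enough to make the manifold stochastically incomplete. By Grigor'yan's volume test for models, incompleteness holds if and only if $\int^\infty \frac{\mu(B_r)}{\mu'(B_r)}\de r<\infty$, where $\mu'(B_r)$ is the weighted area of the sphere. Write $u_t(x)=\int_M p_t(x,y)\de\mu(y)$. By symmetry $u_t(x)=u_t(r)$ is radial, so $|\nabla u_t|=|\partial_r u_t|$, and
\begin{equation*}
\int_M|\nabla u_t|\de\mu=\int_0^\infty|\partial_r u_t(r)|\,\mu'(B_r)\de r .
\end{equation*}
The goal is to make this infinite for every $t>0$. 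The key observation is that $u_t\to 0$ as $r\to\infty$ in the incomplete case: mass escapes to infinity quickly. The tail therefore has to be integrated against a weight $\mu'(B_r)$ that grows extremely fast.

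\textbf{Step 1.} Obtain a formula for $\partial_r u_t$. Since $u_t$ solves the heat equation, the divergence theorem on $B_r$ gives
\begin{equation*}
\mu'(B_r)\,\partial_r u_t(r)=\int_{B_r}\Delta u_t\de\mu=\int_{B_r}\partial_t u_t\de\mu .
\end{equation*}
Hence
\begin{equation*}
\int_M|\nabla u_t|\de\mu=\int_0^\infty\Big|\int_{B_r}\partial_t u_t\de\mu\Big|\de r .
\end{equation*}
The function $u_t$ is decreasing in $t$, because $\partial_t u_t=\int\partial_t p_t\leq 0$, which follows from $u_{t+s}=\mathrm h_t u_s\leq u_t$. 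So the inner integral equals $-\int_{B_r}\partial_t u_t\de\mu$, which is nondecreasing in $r$ and converges to $\frac{d}{dt}\mu$-mass loss $=-\frac{d}{dt}\int_M u_t\de\mu$ when $\mu(M)<\infty$.

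\textbf{Step 2.} Exploit this. If $-\int_{B_{r_0}}\partial_t u_t\de\mu=\delta>0$ for some $r_0$, then the integrand is at least $\delta$ for $r\geq r_0$, and the $\de r$ integral diverges. This is why completeness is needed: it makes the radial variable range over $[0,\infty)$. So it suffices to show $\partial_t u_t\not\equiv 0$ for every $t>0$.

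\textbf{Step 3.} Show $\partial_t u_t\not\equiv 0$. Suppose $\partial_t u_t\equiv 0$ at some $t_0$. Then $\Delta u_{t_0}=0$, and $u_{t_0}$ is a bounded radial harmonic function with $\partial_r u_{t_0}(0)=0$ by smoothness. The radial ODE $(\mu'(B_r)u')'=0$ then forces $u_{t_0}$ to be constant. Next I use analyticity in $t$ of the semigroup on $L^\infty$ (or argue with $\mathrm h_s u_{t_0}=u_{t_0+s}$). Constancy at $t_0$ gives $u_{t_0+s}=c\,u_s$, so each $u_s$ is constant in $x$. I expect this step to be the main obstacle, and would handle it via the characterization of stochastic completeness: $u_t$ constant in $x$ for all $t$, with $u$ also decreasing, gives $u_t=e^{-\lambda t}$. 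Then $\Delta 1=0=-\lambda e^{-\lambda t}$ forces $\lambda=0$, so $u_t\equiv 1$, which contradicts incompleteness.

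\textbf{Step 4.} Finish the construction. Choose, for example, $M=\R$ (or $\R^n$ radially) with the Euclidean metric, which is complete, and $V(r)=r^3$ or any smooth radial $V$ with $\int^\infty e^{V(r)}\int_r^\infty e^{-V}\de s\,\de r<\infty$, so that $\mu$ is finite and the volume test yields incompleteness. Then Steps 1–3 give \eqref{eq:grad_heat_blowup} for every $t>0$.
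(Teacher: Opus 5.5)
Your Steps 1--2 use the same mechanism as the paper: the identity $A(r)\partial_r u=\int_0^r A\,\partial_t u\,\de s$ and the monotonicity in $r$ of $q_t=-A\partial_r u$. The proposal nevertheless does not prove the statement, because the example in Step 4 is stochastically \emph{complete}. A complete weighted manifold with $\mu(M)<\infty$ is always stochastically complete (even parabolic). You can see this directly from the model criterion you quote yourself. For the weight $e^{-r^3}$ one has $\mu(B_r)\to\mu(M)>0$ while $\mu'(B_r)\sim c\,r^{n-1}e^{-r^3}$, so $\int^\infty \mu(B_r)/\mu'(B_r)\,\de r=+\infty$. Hence ${\rm h}_t1\equiv 1$, and the integral in the statement equals $0$ for every $t>0$. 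The condition you wrote in Step 4 has the tail $\int_r^\infty e^{-V}\de s$ where the volume test has $\int_0^r e^{-V}\de s$. By Fubini it is Feller's condition for $+\infty$ to be an \emph{entrance} boundary (strong inward drift $-V'$), and that is compatible with conservativeness. To get incompleteness the weight must grow, e.g.\ $e^{+|x|^4}$ on $\R^3$ as in the paper (or $e^{r^\alpha}$ with $\alpha>2$), where $\mu(B_r)/\mu'(B_r)\sim 1/(4r^3)$ is integrable. In particular $\mu(M)=\infty$ there, so the mass-loss remark in Step 1 plays no role.

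Step 3 also has a gap, exactly where you expected one. From $u_{t_0}\equiv c$ you correctly get $u_{t_0+s}=c\,u_s$, but this only carries information forward in time (e.g.\ $u_{kt_0}=c^k$). It says nothing about the spatial profile of $u_s$ for other $s$, so ``each $u_s$ is constant'' does not follow. Time-analyticity of the $L^\infty$ semigroup is not something you can invoke on a general complete manifold; in the corrected example $1\notin L^2$, so the spectral theorem does not provide it either.

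A clean repair uses $w:=-\partial_t u\geq 0$, which is a smooth solution of the heat equation on $(0,\infty)\times M$. By the strong parabolic minimum principle, if $w(t_0,x_0)=0$ at a single point then $w\equiv 0$ on $(0,t_0]\times M$. Thus $u_t=u_{t_0}$ for $t\leq t_0$. Since $u_t(x)\to 1$ as $t\to 0^+$ (e.g.\ because $u_t\geq {\rm h}^{B_\delta(x)}_t1$), this forces $u_{t_0}\equiv 1$, hence $u\equiv 1$ by the semigroup property, contradicting incompleteness.

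With this fix your route needs only stochastic incompleteness of the model, and in fact yields $\partial_t u<0$ everywhere. The paper instead proves the stronger fact $u(t,r)\to 0$ as $r\to\infty$, by comparing $\int_0^t{\rm h}^{B_R}_s1\,\de s$ with the explicit barrier $w_R$. This gives incompleteness without quoting the volume test. Combined with $u(t,0)>0$ and $\partial_r u\leq 0$, it also gives a radius $r_t$ with $\partial_r u(t,r_t)<0$.
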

\begin{proof}
    Consider $\R^3$ with the Euclidean metric and $\mu=e^{|\cdot|^4}\mathscr{L}^3$.
Now we define the quantity 
\begin{equation}
\label{eq: radial_m}
    m(t,x):=\int_M p_t(x,y)\de\mu(y),
\end{equation}
which is such that $0\leq m\leq 1$. Note that $m$ is radial in the space variable and that $t\mapsto m(t,\rho)$ (here we keep denoting $m$ the radial counterpart of the function defined in \eqref{eq: radial_m}) is non increasing since $m(t+s,\cdot)={\rm h}_t(m(s,\cdot))\leq {\rm h}_t1=m(t,\cdot)$. Now set $u(t,r)=m(t,|x|)$, where $r=|x|$. Thanks the explicit expression of the weighted Laplacian, it is easy to verify that $u$ solves the following
\begin{equation}
\label{eq:1d_heat}
    \partial_t u(t,r)=\frac{1}{A(r)}\partial_r\big(A(r)\partial_ru(t,r)\big),
\end{equation}
where $A(r)=r^2e^{r^4}$. 

\emph{Claim:} $\lim_{r\to\infty}u(t,r)=0$ for all $t>0$.

To prove such claim let $u_R={\rm h}_t^{B_R}1$ be the solution to the problem 
\begin{equation*}
   \begin{cases}
    \partial z = \Delta z\quad{\rm in}\;(0,\infty)\times{B_R}\\
    z=0\quad{\rm on}\;(0,\infty)\times\partial B_R \\ 
    z(0,\cdot)=1\quad{\rm on}\;B_R;
\end{cases} 
\end{equation*}

Then by \cite[Exercise 7.40]{G09} we have that for all $t,r$ fixed $u_R(t,r)\to u(t,r)$ as $R\to\infty$ monotonically from below. Now set 
\begin{equation*}
    v_R(t,r)=\int_0^tu_R(s,r)\de s,\quad w_R(r)=\int_r^R\frac{1-e^{-s^4}}{s^3}\de s.
\end{equation*}
Observe that (abusing a bit the notation) for all $t>0$ we get
\begin{equation*}
    \Delta v_R(t,\cdot)=\int_0^t\partial_su_R(s,\cdot)\de s=u_R(t,\cdot)-1\geq -1,\quad v_R=0\quad{\rm on}\;\partial B_R
\end{equation*}
and also
\begin{equation*}
    \Delta w_R=-4+\frac{e^{r^4}-1}{r^4e^{r^4}}< -1\quad{\rm on}\,B_R,\quad w_R=0\quad{\rm on}\;\partial B_R.
\end{equation*}
Together the previous facts yield
\begin{equation*}
    \Delta(v_R-w_R)\geq 0\quad{\rm on}\,B_R,\quad v_R-w_R=0\quad{\rm on}\;\partial B_R,
\end{equation*}
for all $t>0$.
By the maximum principle we therefore infer $v_R\leq w_R$ on $B_R$ for all $t>0$. Since (by the usual semigroup identity) $t \mapsto u_R(t,\cdot)$ is non-increasing we get
\begin{equation*}
    tu_R(t,r)\leq\int_0^tu_R(s,r)\de s=v_R(t,r)\leq w_R(r).
\end{equation*}
Therefore letting first $R\to\infty$ and then $r\to\infty$ we infer the claim thanks to the equiintegrability of $w_R$. Observe that the claim also implies that the manifold $(M,g,\mu)$ is stochastically incomplete. Now again by the maximum principle and the monotone convergence of the Dirichlet heat flow we get $u(t,0)\geq u_R(t,0)>0$ for every $t>0$. We now integrate \eqref{eq:1d_heat} using the smoothness at $r=0$ so we end up with
\begin{equation*}
    A(r)\partial_ru(t,r)=\int_0^rA(s)\partial_tu(t,s)\de s\leq 0.
\end{equation*}
The latter means $\partial_ru(t,r)\leq 0$, due to the fact that $A\geq 0$ and $\partial_tu\leq 0$. Now since $u(t,0)>0$, $\partial_ru\leq 0$ and $\lim_{r\to\infty}u(t,r)=0$, for all $t>0$ there exists $r_t$ such that $\partial_ru(t,r_t)<0$. We then have
\begin{equation*}
    q_t(r):=-A(r)\partial_ru(t,r)\geq q_t(r_t)=:\delta_t>0\;\;\forall r\geq r_t,
\end{equation*}
implying that
\begin{equation*}
\int_{\R^3}|\nabla m|(t,x)\de\mu(x)=4\pi\int_0^\infty q_t(r)\de r\geq 4\pi\int_{r_t}^\infty\delta_t\de r=+\infty.
\end{equation*}
\end{proof}

\begin{proof}[Proof of Theorem \ref{thm:equiv_failure}]
    Consider the weighted manifold $(M,g,\mu)$ of Proposition \ref{prop:grad_heat_blowup}. Let $E$ be a bounded set of finite perimeter and assume by contradiction that there exists $t>0$ such that \eqref{eq:grad_heat_blowup} is finite. For such $t$ we would have 
    \begin{equation*}
|\nabla{\rm h}_t\chi_E|+|\nabla{\rm h}_t\chi_{E^c}|\geq\bigg|\nabla\int_Mp_t(\cdot,y)\de\mu(y)\bigg| .
    \end{equation*}
    Integrating over $M$, exploiting that $\|\nabla {\rm h}_t\chi_E\|_{L^1(M)}$ is finite for all $t>0$ and \eqref{eq:grad_heat_blowup} we then reach a contradiction.
\end{proof}
\begin{remark}
\label{rem:counter_manifold}
    One can obtain the same pathological behavior with a conical manifold $(M,g)$ which is smooth everyhere except at a point. Indeed it suffices to consider $M=[0,\infty)\times\mathbb{S}^2$ with $g=\de r^2+\psi^2(r)g_{\mathbb{S}^2}$, where $\psi(r)=re^{\frac{r^4}{2}}$. It is easy to check that the Laplace-Beltrami operator $\Delta_g$ on radial functions is the same as the one of the weighted manifold example. Moreover the integral of the heat kernel $m(t,x)$ is still a radial function and the rest of the proof translates verbatim to this setting.
\end{remark}
\noindent \textbf{Acknowledgements.} 
The author would like to thank Michele Caselli and Diego Pallara for a careful reading of a preliminary version of this manuscript and Diego Pallara for proposing the problem. \\
This work was supported by UK Research and Innovation (UKRI) under the Horizon Europe funding guarantee [grant number EP/Z000297/1].
\bibliography{references}
            \bibliographystyle{alpha}

\end{document}